\def\today{\ifcase \month \or
   January \or February \or March \or April \or
   May \or June \or July \or August \or
   September \or October \or November \or December \fi
   \space\number\day , \number\year}
  \newcommand\@dotsep{4.5}
  \def\@tocline#1#2#3#4#5#6#7{\relax
     \ifnum #1>\c@tocdepth 
     \else
     \par \addpenalty\@secpenalty\addvspace{#2}%
     \begingroup \hyphenpenalty\@M
     \@ifempty{#4}{%
     \@tempdima\csname r@tocindent\number#1\endcsname\relax
        }{%
         \@tempdima#4\relax
           }%
      \parindent\z@ \leftskip#3\relax \advance\leftskip\@tempdima\relax
      \rightskip\@pnumwidth plus1em \parfillskip-\@pnumwidth
       #5\leavevmode\hskip-\@tempdima #6\relax
       \leaders\hbox{$\m@th
       \mkern \@dotsep mu\hbox{.}\mkern \@dotsep mu$}\hfill
       \hbox to\@pnumwidth{\@tocpagenum{#7}}\par
       \nobreak
        \endgroup
         \fi}
\begin{document}


\makeatletter
\@addtoreset{figure}{section}
\def\thefigure{\thesection.\@arabic\c@figure}
\def\fps@figure{h,t}
\@addtoreset{table}{bsection}

\def\thetable{\thesection.\@arabic\c@table}
\def\fps@table{h, t}
\@addtoreset{equation}{section}
\def\theequation{
\arabic{equation}}
\makeatother

\newcommand{\bfi}{\bfseries\itshape}

\newtheorem{theorem}{Theorem}
\newtheorem{acknowledgment}[theorem]{Acknowledgment}
\newtheorem{corollary}[theorem]{Corollary}
\newtheorem{definition}[theorem]{Definition}
\newtheorem{example}[theorem]{Example}
\newtheorem{examples}[theorem]{Examples}
\newtheorem{lemma}[theorem]{Lemma}
\newtheorem{notation}[theorem]{Notation}
\newtheorem{proposition}[theorem]{Proposition}
\newtheorem{remark}[theorem]{Remark}
\newtheorem{setting}[theorem]{Setting}

\numberwithin{theorem}{section}
\numberwithin{equation}{section}

\renewcommand{\1}{{\bf 1}}
\newcommand{\Ad}{{\rm Ad}}
\newcommand{\Alg}{{\rm Alg}\,}
\newcommand{\alg}{*{\rm alg}}
\newcommand{\Aut}{{\rm Aut}\,}
\newcommand{\ad}{{\rm ad}}
\newcommand{\Borel}{{\rm Borel}}
\newcommand{\botimes}{\bar\otimes}
\newcommand{\Ci}{{\mathcal C}^\infty}
\newcommand{\Cint}{{\mathcal C}^\infty_{\rm int}}
\newcommand{\Cpol}{{\mathcal C}^\infty_{\rm pol}}
\newcommand{\cont}{{\rm cont}}
\newcommand{\Der}{{\rm Der}\,}
\newcommand{\de}{{\rm d}}
\newcommand{\ee}{{\rm e}}
\newcommand{\End}{{\rm End}\,}
\newcommand{\ev}{{\rm ev}}
\newcommand{\id}{{\rm id}}
\newcommand{\ie}{{\rm i}}
\newcommand{\ind}{\mathop{\rm ind}}
\newcommand{\GL}{{\rm GL}}
\newcommand{\gl}{{{\mathfrak g}{\mathfrak l}}}
\newcommand{\Hom}{{\rm Hom}\,}
\newcommand{\Img}{{\rm Im}\,}
\newcommand{\Ind}{{\rm Ind}}
\newcommand{\Ker}{{\rm Ker}\,}
\newcommand{\Lie}{\text{\bf L}}
\newcommand{\lin}{{\rm lin}}
\newcommand{\Mt}{{{\mathcal M}_{\text t}}}
\newcommand{\m}{\text{\bf m}}
\newcommand{\pr}{{\rm pr}}
\newcommand{\Ran}{{\rm Ran}\,}
\renewcommand{\Re}{{\rm Re}\,}
\newcommand{\so}{\text{so}}
\newcommand{\spa}{{\rm span}\,}
\newcommand{\Tr}{{\rm Tr}\,}
\newcommand{\tw}{\ast_{\rm tw}}
\newcommand{\Op}{{\rm Op}}
\newcommand{\U}{{\rm U}}
\newcommand{\weak}{\text{weak}}

\newcommand{\QuadrHilb}{\textbf{QuadrHilb}}
\newcommand{\LieGr}{\textbf{LieGr}}

\newcommand{\UC}{{{\mathcal U}{\mathcal C}}}
\newcommand{\UCb}{{{\mathcal U}{\mathcal C}_b}}
\newcommand{\LUCb}{{{\mathcal L}{\mathcal U}{\mathcal C}_b}}
\newcommand{\RUCb}{{{\mathcal R}{\mathcal U}{\mathcal C}_b}}
\newcommand{\LUCl}{{{\mathcal L}{\mathcal U}{\mathcal C}}_{\rm loc}}
\newcommand{\RUCl}{{{\mathcal R}{\mathcal U}{\mathcal C}}_{\rm loc}}
\newcommand{\UCl}{{{\mathcal U}{\mathcal C}}_{\rm loc}}

\newcommand{\CC}{{\mathbb C}}
\newcommand{\RR}{{\mathbb R}}
\newcommand{\TT}{{\mathbb T}}

\newcommand{\Ac}{{\mathcal A}}
\newcommand{\Bc}{{\mathcal B}}
\newcommand{\Cc}{{\mathcal C}}
\newcommand{\Dc}{{\mathcal D}}
\newcommand{\Ec}{{\mathcal E}}
\newcommand{\Fc}{{\mathcal F}}
\newcommand{\Hc}{{\mathcal H}}
\newcommand{\Jc}{{\mathcal J}}
\newcommand{\Lc}{{\mathcal L}}
\renewcommand{\Mc}{{\mathcal M}}
\newcommand{\Nc}{{\mathcal N}}
\newcommand{\Oc}{{\mathcal O}}
\newcommand{\Pc}{{\mathcal P}}
\newcommand{\Qc}{{\mathcal Q}}
\newcommand{\Rac}{{\mathcal R}}
\newcommand{\Sc}{{\mathcal S}}
\newcommand{\Tc}{{\mathcal T}}
\newcommand{\Uc}{{\mathcal U}}
\newcommand{\Vc}{{\mathcal V}}
\newcommand{\Wig}{{\mathcal W}}
\newcommand{\Xc}{{\mathcal X}}
\newcommand{\Yc}{{\mathcal Y}}

\newcommand{\Bg}{{\mathfrak B}}
\newcommand{\Dg}{{\mathfrak D}}
\newcommand{\Fg}{{\mathfrak F}}
\newcommand{\Gg}{{\mathfrak G}}
\newcommand{\Ig}{{\mathfrak I}}
\newcommand{\Jg}{{\mathfrak J}}
\newcommand{\Lg}{{\mathfrak L}}
\newcommand{\Pg}{{\mathfrak P}}
\newcommand{\Rg}{{\mathfrak R}}
\newcommand{\Sg}{{\mathfrak S}}
\newcommand{\Xg}{{\mathfrak X}}
\newcommand{\Yg}{{\mathfrak Y}}
\newcommand{\Zg}{{\mathfrak Z}}

\newcommand{\ag}{{\mathfrak a}}
\newcommand{\bg}{{\mathfrak b}}
\newcommand{\dg}{{\mathfrak d}}
\renewcommand{\gg}{{\mathfrak g}}
\newcommand{\hg}{{\mathfrak h}}
\newcommand{\kg}{{\mathfrak k}}
\newcommand{\mg}{{\mathfrak m}}
\newcommand{\n}{{\mathfrak n}}
\newcommand{\og}{{\mathfrak o}}
\newcommand{\pg}{{\mathfrak p}}
\newcommand{\sg}{{\mathfrak s}}
\newcommand{\tg}{{\mathfrak t}}
\newcommand{\ug}{{\mathfrak u}}
\newcommand{\zg}{{\mathfrak z}}

\newcommand{\ZZ}{\mathbb Z}
\newcommand{\NN}{\mathbb N}
\newcommand{\BB}{\mathbb B}
\newcommand{\HH}{\mathbb H}

\newcommand{\ep}{\varepsilon}

\newcommand{\hake}[1]{\langle #1 \rangle }

\newcommand{\scalar}[2]{\langle #1 ,#2 \rangle }
\newcommand{\vect}[2]{(#1_1 ,\ldots ,#1_{#2})}
\newcommand{\norm}[1]{\Vert #1 \Vert }
\newcommand{\normrum}[2]{{\norm {#1}}_{#2}}

\newcommand{\upp}[1]{^{(#1)}}
\newcommand{\p}{\partial}

\newcommand{\opn}{\operatorname}
\newcommand{\slim}{\operatornamewithlimits{s-lim\,}}
\newcommand{\sgn}{\operatorname{sgn}}

\newcommand{\seq}[2]{#1_1 ,\dots ,#1_{#2} }
\newcommand{\loc}{_{\opn{loc}}}

\makeatletter
\title[On differentiability of vectors]{On differentiability of vectors in  Lie group representations}
\author{Ingrid Belti\c t\u a 
 and Daniel Belti\c t\u a
}
\address{Institute of Mathematics ``Simion Stoilow'' 
of the Romanian Academy, 
P.O. Box 1-764, Bucharest, Romania}
\email{Ingrid.Beltita@imar.ro}
\email{Daniel.Beltita@imar.ro}
\keywords{Lie group; topological group; unitary representation; smooth vector}
\subjclass[2000]{Primary 22E65; Secondary 22E66,22A10,22A25}
\date{\today}
\makeatother

\begin{abstract}
We address a linearity problem for differentiable vectors in representations of infinite-dimensional Lie groups on locally convex spaces, 
which is similar to the linearity problem for the directional derivatives of functions.
\end{abstract}

\maketitle

\section{Introduction}

In this paper we address an issue raised in \cite[Probl. 13.4]{Nee10} concerning some linearity properties for differentiable vectors in representations 
of infinite-dimensional Lie groups; see also \cite{Nee11} and the concluding comments in Examples~\ref{cont_ex} below. 
Before proceeding to a more detailed description of our approach to that question, we should mention that the recent interest in this circle of ideas is motivated in part by some applications of representation theory in the theory of operator algebras on the one hand (see \cite{Nee10}) and in the theory of partial differential equations on the other hand (see \cite{BB09} and \cite{BB11}). 

It is worthwhile to place ourselves for the moment in the setting of topological groups, although the main classes of examples are provided by infinite-dimensional Lie groups modeled on locally convex spaces 
(see situations \eqref{lin_ex_item3}--\eqref{lin_ex_item3.6} and \eqref{lin_ex_item3.4}, 
and also sometimes \eqref{lin_ex_item1}, in Examples~\ref{lin_ex} below). 
Thus let $G$ be a topological group and denote by $\Lg(G)$ the set of its one-parameter subgroups (i.e., continuous homomorphisms from the additive group $\RR$ into $G$), endowed with topology of uniform convergence on the compact subsets of~$\RR$. 
For $X,X_1,X_2\in\Lg(G)$  we say that $X=X_1+X_2$ whenever we have     
\begin{equation}\label{aux8_eq1}
(\forall t\in\RR)\quad 
X(t)=\lim\limits_{n\to\infty}
\Bigl(X_1\Bigl(\frac{t}{n}\Bigr)X_2\Bigl(\frac{t}{n}\Bigr)\Bigr)^n
\end{equation}
uniformly on every compact subset of $\RR$. 

Next let $\pi\colon G\to\End(\Yc)$ be a representation on 
a locally convex space~$\Yc$ such that the group action 
$G\times\Yc\to\Yc$, $(g,y)\mapsto\pi(g)y$ 
is continuous.
(The continuous unitary representations provide a rich class of interesting and non-trivial examples, however we are also interested in representations on locally convex spaces in order to make our results applicable for group representations in spaces of smooth functions or smooth vectors.)
For every $X\in\Lg(G)$ we consider the infinitesimal generator of the one-parameter group of operators $\pi(X(\cdot))$, 
$$\de\pi(X):=\frac{\de}{\de t}\Big\vert_{t=0}\pi(X(t))\colon \Dc(\de\pi(X))\to\Yc.$$ 
Next define the following linear subspaces of $\Yc$: 
\begin{align}
\Dc_{\de\pi}^1
&:=\bigcap_{X\in\Lg(G)}\Dc(\de\pi(X)), \nonumber \\
\Dc_{\de\pi}^{1,\cont}
&:=\{y\in\Dc_{\de\pi}^1\mid \de\pi(\cdot)y\in\Cc(\Lg(G),\Yc)\} \nonumber \\
\Dc_{\de\pi}^{1,\lin}
&:=\{y\in\Dc_{\de\pi}^1\mid\text{if }X,X_1,X_2\in\Lg(G)\text{ and }X=X_1+X_2
\nonumber \\ 
&\hskip70pt \text{ then } 
\de\pi(X)y=\de\pi(X_1)y+\de\pi(X_2)y\}.\nonumber
\end{align}
Here and henceforth we denote by $\Cc(A,S)$ the space of continuous mappings between any 
topological spaces $A$ and~$S$. 

The main motivation for the present paper consists in understanding the relationship between these spaces. 
In particular, we are interested in describing wide classes of group representations for which the above three spaces coincide. 
It turned out in \cite{Nee10} that this problem is far from being trivial even in the special case when $G$ is a Banach-Lie group and $\Yc$ is a Banach space. 
By way of showing the difficulty of this problem, we recall that if $G$ is the group of unitary operators on some Hilbert space, endowed with the strong operator topology, then \eqref{aux8_eq1} is the Trotter formula for the sum of the infinitesimal generators of the corresponding one-parameter groups, and the addition of unbounded self-adjoint operators is a highly nontrivial problem, 
as discussed in \cite{Far75}; 
see also \cite{Che74} and \cite[Ex. 2.1]{Bel10} for a lot of pathologies related to this operation. 

Our approach to the aforementioned problem relies on a basic technique developed in \cite{Nel69} and \cite{BC73} that turns out to work in a more general setting; see Lemma~\ref{aux8} below. 
Our main results are Theorems \ref{lin_th} and \ref{final},  
which in particular provide an alternative proof to \cite[Th. 8.2]{Nee10}. 

Throughout the paper we assume that the topological groups and the locally convex spaces involved are Hausdorff spaces.

\section{Differentiable functions}

The main result of this section is Theorem~\ref{lin_th} which establishes the relationship between the above property~\eqref{aux8_eq1} and the ``first-order differential operators'' in a topological setting. 
In the first part of the following definition we recall some notions introduced in \cite{BC73} and \cite{BCR81}. 

\begin{definition}\label{aux4}
\normalfont
Let $G$ be a topological group and $\Yc$ be a locally convex space.  
\begin{enumerate}
\item\label{aux4_item1}  
If $\phi\colon G\to\Yc$, $X\in\Lg(G)$, and $g\in G$, then we denote 
\begin{equation}\label{aux4_eq1}
(D^\lambda_X\phi)(g)=\lim_{t\to 0}\frac{\phi(gX(t))-\phi(g)}{t} 
\end{equation}
whenever the limit in the right-hand side exists. 

\item\label{aux4_item2}  
We define $\Cc^1(G,\Yc)$ as the set of all $\phi\in\Cc(G,\Yc)$ 
such that the function 
$$
D^\lambda\phi\colon\Lg(G)\times G\to\Yc,\quad 
(D^\lambda\phi)(X,g)=(D^\lambda_X\phi)(g) $$
is well defined and continuous. 
We also denote $D^\lambda\phi=(D^\lambda)^{1}\phi$. 

Now let $n\ge 2$ and assume that the space $\Cc^{n-1}(G,\Yc)$ and the mapping 
$(D^\lambda)^{n-1}$ have been defined. 
Then we define $\Cc^n(G,\Yc)$ as the set of all functions $\phi\in\Cc^{(n-1)}(G,\Yc)$ 
such that the function 
$$\begin{aligned}
(D^\lambda)^{n}\phi\colon 
\Lg(G)\times\cdots\times\Lg(G)\times G & \to\Yc,\\
(X_1,\dots,X_n,g) & \mapsto (D^\lambda_{X_1}(D^\lambda_{X_2}\cdots(D^\lambda_{X_n}\phi)\cdots))(g)
\end{aligned} $$
is well defined and continuous. 

Moreover we define $\Ci(G,\Yc):=\bigcap\limits_{n\ge1}\Cc^n(G,\Yc)$. 

\item\label{aux4_item3} 
Let $\LUCl(G,\Yc)$ be the set of all functions $\phi\colon G\to\Yc$ such that every point $g_0\in G$ has a neighborhood $V$ such that $\phi\vert_V$ is left uniformly continuous, in the sense that for every neighborhood $U$ of $0\in\Yc$ there exists a neighborhood $W$ of $\1\in G$ such that if $x,y\in V$ and $x^{-1}y\in W$, then $\phi(x)-\phi(y)\in U$. 
We define $\LUCl^1(G,\Yc)$ as the set of all functions $\phi\in\LUCl(G,\Yc)$ 
such that the above mapping $D^\lambda\phi\colon\Lg(G)\times G\to\Yc$ 
is well defined and for every $X\in\Lg(G)$ we have $D^\lambda_X\phi\in\LUCl(G,\Yc)$. 
For $n\ge 2$, if the space $\LUCl^{n-1}(G,\Yc)$  has been defined,  
then we define $\LUCl^n(G,\Yc)$ as the set of all functions $\phi\in\LUCl^{n-1}(G,\Yc)$ 
such that the mapping $(D^\lambda)^n\phi$ is well defined 
and for every $X_1,\dots,X_n$ we have $D^\lambda_{X_1}(D^\lambda_{X_2}\cdots(D^\lambda_{X_n}\phi)\cdots)
\in\LUCl(G,\Yc)$. 
Moreover we define $\LUCl^\infty(G,\Yc):=\bigcap\limits_{n\ge1}\LUCl^n(G,\Yc)$.

\end{enumerate}
If $\Yc=\CC$, then we write simply $\Cc^n(G):=\Cc^n(G,\CC)$, $\LUCl^n(G):=\LUCl^n(G,\CC)$,  
etc.,  
for $n=1,2,\dots,\infty$.
\end{definition}

We now record some simple remarks on the notions introduced above.  
If $G$ is a locally compact group,
then $\Cc(G,\Yc)=\LUCl(G,\Yc)$, 
since continuous functions on compact sets are uniformly continuous. 
On the other hand, if $G$ is Banach-Lie group and $\Yc$ is a Banach space, then  
$\Cc^n(G,\Yc)\subseteq\LUCl^{n-1}(G,\Yc)$ for every $n\ge 1$, 
where $\LUCl^0(G,\Yc):=\LUCl(G,\Yc)$. 
Note that the $\Cc^n$-concept used here involves directional derivatives rather 
than the Fr\'echet differentials of functions 
as in the  
$\Cc^n$-concept habitually used in the differential calculus on Banach manifolds.  
They agree for finite-dimensional Lie groups 
(\cite[Folg. 1.4]{BC73}), 
however the one used here is slightly weaker in infinite dimensions. 
Nevertheless, if for instance $\phi\in\Cc^1(G,\Yc)$, then one can show that $\phi\in\LUCl(G,\Yc)$ as follows.  
Since $\LUCl(G,\Yc)$ is invariant under translations to the left, 
it suffices to check the condition in Definition~\ref{aux4}\eqref{aux4_item3} 
at $g_0=\1\in G$. 
By working in an local chart onto a neighborhood $V_0$ of $0\in\gg$ and denoting by $\ast$ the corresponding local multiplication,  
it follows that we have to prove the following assertion: 
There exists a neighborhood $V$ of $0\in V_0$ with the property that  
for every $\epsilon>0$ there exists $\delta>0$ such that 
if $x\in V$ and $y\in\gg$ with $\Vert y\Vert<\delta$, 
then $x\ast y\in V_0$ and $\Vert\phi(x\ast y)-\phi(x)\Vert<\epsilon$, 
where we denote by $\Vert\cdot\Vert$ some norms that define the topologies of $\gg$ and $\Yc$, respectively. 
This assertion follows at once by the mean value theorem, as soon as we have found the neighborhood $V$ of $0\in\gg$ such that 
the closure of $V$ is contained in $V_0$ and 
the differential $\de\phi\colon V\times\gg\to\Yc$ exists and is continuous, 
since then we can shrink $V$ to get 
$\sup\{\Vert\de\phi(x,\cdot)\Vert\mid x\in V\}<\infty$. 
(Recall from \cite[Subsect. 3.2]{Ham82} that the above continuity property of $\de\phi$ ensures that $\de\phi(x,\cdot)\colon\gg\to\Yc$ is linear for every $x\in V$.)
Since $G$ is a Banach-Lie group and $\phi\in\Cc^1(G,\Yc)$, we can use the left trivalization of the tangent bundle of $G$ along with 
the homeomorphism $\gg\to\Lg(G)$, $x\mapsto\gamma_x$, from Examples~\ref{lin_ex}\eqref{lin_ex_item3} below 
to find a neighborhood $V$ of $0\in\gg$ such that $\de\phi$ is continuous on $V\times\gg$, and this completes the proof of the fact that $\phi\in\LUCl(G,\Yc)$.

We now present a version of Taylor's formula suitable for our present purposes. 
See  \cite{Glo02} for a thorough discussion on how to avoid the assumption that functions should take values in a sequentially complete space. 

\begin{proposition}\label{aux6}
Let $G$ be a topological group, 
$\Yc$ be a 
locally convex space, $n\ge1$, and $\phi\in\LUCl^n(G,\Yc)$. 
Then the following assertions hold: 
\begin{enumerate}
\item\label{aux6_item1} 
For every $g\in G$, $X\in\Lg(G)$, and $t\in\RR$ we have 
$$\phi(gX(t))
=\sum_{j=0}^n\frac{t^j}{j!}((D^\lambda_X)^j\phi)(g)+t^n\chi_1(g,X,t)$$
where $\chi_1\colon G\times\Lg(G)\times\RR\to\Yc$ 
is a function such that 
$\lim\limits_{t\to0}\chi_1(g,X,t)=0$. 
\item\label{aux6_item2}
If $X_1,X_2\in\Lg(G)$, then 
$$\phi(gX_1(t)X_2(t))=\phi(g)+t((D^\lambda_{X_1}+D^\lambda_{X_2})\phi)(g)+t\chi_2(g,X_1,X_2,t)$$
where $\chi_2\colon G\times\Lg(G)\times\Lg(G)\times\RR\to\Yc$ 
is a function satisfying the condition  
$\lim\limits_{t\to0}\chi_2(g,X_1,X_2,t)=0$. 
\end{enumerate}
Moreover, for every $g_0\in G$ there exists a neighborhood $V_0$ such that in the above assertions we have both $\lim\limits_{t\to0}\chi_1(g,X,t)=0$  
and $\lim\limits_{t\to0}\chi_2(g,X_1,X_2,t)=0$ uniformly for $g\in V_0$. 
\end{proposition}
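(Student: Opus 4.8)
The plan is to reduce both assertions to a one-variable Taylor expansion along the orbit $t\mapsto gX(t)$ and to control the remainder by means of the mean value inequality for continuous seminorms, thereby avoiding any use of vector-valued integrals; this is precisely the point of the reference to \cite{Glo02}, since $\Yc$ is not assumed to be sequentially complete.

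First I would fix $g\in G$ and $X\in\Lg(G)$ and study the curve $f\colon\RR\to\Yc$, $f(t):=\phi(gX(t))$. Since $X$ is a one-parameter subgroup we have $gX(t)X(s)=gX(t+s)$, so for $\psi:=(D^\lambda_X)^j\phi$ the difference quotient $\frac1s\bigl(\psi(gX(t)X(s))-\psi(gX(t))\bigr)$ tends to $(D^\lambda_X\psi)(gX(t))=((D^\lambda_X)^{j+1}\phi)(gX(t))$ as $s\to0$, directly from \eqref{aux4_eq1}. Because $\phi\in\LUCl^n(G,\Yc)$, the iterated derivatives $(D^\lambda_X)^j\phi$ with $0\le j\le n$ are well defined and lie in $\LUCl(G,\Yc)\subseteq\Cc(G,\Yc)$; together with the continuity of $t\mapsto gX(t)$, an induction on $j$ then shows that $f$ is $n$ times differentiable with $f^{(j)}(t)=((D^\lambda_X)^j\phi)(gX(t))$ for $0\le j\le n$, and that $f^{(n)}$ is continuous.

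For assertion~\eqref{aux6_item1} I would set $R(t):=f(t)-\sum_{j=0}^n\frac{t^j}{j!}f^{(j)}(0)$, so that $R^{(k)}(0)=0$ for $0\le k\le n$ and $R^{(n)}(t)=f^{(n)}(t)-f^{(n)}(0)$. Applying the mean value inequality $p(h(t)-h(0))\le|t|\sup_{|s|\le|t|}p(h'(s))$, which holds for every continuous seminorm $p$ on $\Yc$ and every $\Cc^1$ curve $h$ with no completeness hypothesis, successively to $R^{(n-1)},R^{(n-2)},\dots,R$ yields $p(R(t))\le|t|^n M(t)$, where $M(t):=\sup_{|s|\le|t|}p\bigl(f^{(n)}(s)-f^{(n)}(0)\bigr)$. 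Defining $\chi_1(g,X,t):=R(t)/t^n$ for $t\ne0$ and $\chi_1(g,X,0):=0$, continuity of $f^{(n)}$ at $0$ gives $M(t)\to0$, whence $p(\chi_1(g,X,t))\le M(t)\to0$ for every $p$; this is \eqref{aux6_item1}.

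The uniform statement and assertion~\eqref{aux6_item2} are where the argument is most delicate. For the uniformity of $\chi_1$ I would bound $M(t)$ uniformly in $g$ over a neighborhood $V_0$ of $g_0$: since $g$ and $gX(s)$ differ by the right factor $X(s)\to\1$, the \emph{left} uniform continuity of $(D^\lambda_X)^n\phi$ near $g_0$, which is exactly the defining property of $\LUCl(G,\Yc)$, combined with the continuity of the action to keep $gX(s)\in V_0$ for small $s$, forces $M(t)\to0$ uniformly for $g\in V_0$. Finally, to prove \eqref{aux6_item2} I would write $\phi(gX_1(t)X_2(t))-\phi(g)$ as $\bigl(\phi(gX_1(t)X_2(t))-\phi(gX_1(t))\bigr)+\bigl(\phi(gX_1(t))-\phi(g)\bigr)$ and apply \eqref{aux6_item1} with $n=1$ to each summand, the first one based at the moving point $gX_1(t)$. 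Dividing by $t$, the term $(D^\lambda_{X_2}\phi)(gX_1(t))$ tends to $(D^\lambda_{X_2}\phi)(g)$ by continuity of $D^\lambda_{X_2}\phi$, while the moving-base remainder is driven to $0$ precisely by the uniform version of \eqref{aux6_item1} just established, giving the claimed expansion with $\chi_2\to0$, again uniformly on a suitable neighborhood. The main obstacle is thus twofold: producing a Taylor remainder without integrals in a merely locally convex $\Yc$, which is handled by the seminorm mean value inequality, and recognizing that the local uniform estimate is not an afterthought but is needed to run the first-order computation in \eqref{aux6_item2}, where the base point itself depends on $t$.
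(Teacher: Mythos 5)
Your proposal is correct, and its overall architecture matches the paper's proof: both reduce assertion~\eqref{aux6_item1} to a Taylor expansion of the curve $t\mapsto\phi(gX(t))$, whose derivatives are identified with $((D^\lambda_X)^j\phi)(gX(t))$ via the one-parameter group property; both obtain assertion~\eqref{aux6_item2} by the identical two-step decomposition through $gX_1(t)$, using the continuity of $D^\lambda_{X_2}\phi$ for the middle term and the \emph{uniform} version of~\eqref{aux6_item1} to absorb the remainder based at the moving point $gX_1(t)$ (the paper makes the latter precise by choosing $V_1$ and $U_1$ with $V_1U_1\subseteq V_0$, which you gesture at correctly). The one genuinely different ingredient is your treatment of the remainder in~\eqref{aux6_item1}: the paper invokes Taylor's formula with integral remainder from \cite[Prop.~1.17]{Glo02} and reads off $\chi_1(g,X,t)$ as an explicit integral of $(1-s)^{n-1}\bigl(((D^\lambda_X)^n\phi)(gX(ts))-((D^\lambda_X)^n\phi)(g)\bigr)$, from which the locally uniform convergence is immediate from the $\LUCl$ property; you instead estimate $R(t)=f(t)-\sum_{j=0}^n\frac{t^j}{j!}f^{(j)}(0)$ by iterating the seminorm mean value inequality, obtaining $p(R(t))\le|t|^nM_p(t)$ with $M_p(t)=\sup_{|s|\le|t|}p\bigl(f^{(n)}(s)-f^{(n)}(0)\bigr)$, and the same $\LUCl$ property drives $M_p(t)\to0$ uniformly for $g$ near $g_0$ (with the standard care that both $g$ and $gX(s)$ lie in the set on which left uniform continuity holds). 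Your route buys complete independence from vector-valued integration and from the external reference \cite{Glo02}, at the cost of a slightly less explicit remainder and a (harmless) loss of the factor $1/n!$; the paper's integral formula buys an explicit $\chi_1$ that makes the uniformity statement a one-line observation. Both arguments are valid without any completeness hypothesis on $\Yc$, and your derivation of the mean value inequality via Hahn--Banach and the scalar fundamental theorem of calculus is sound.
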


\begin{proof}  
Let us define 
$$(\forall t\in\RR)\quad \phi_{g,X}(t)=\phi(gX(t)). $$
Since $\phi_{g,X}\in\Cc^n(\RR,\Yc)$, we get by Taylor's formula 
(see \cite[Prop.~1.17]{Glo02})
$$\begin{aligned}
\phi_{g,X}(t)
&=
\sum_{j=0}^{n-1}\frac{t^j}{j!}\phi_{g,X}^{(j)}(0)
+\frac{t^n}{(n-1)!}\int\limits_0^1(1-s)^{n-1}\phi_{g,X}^{(n)}(ts)\de s \\
&=\sum_{j=0}^{n}\frac{t^j}{j!}\phi_{g,X}^{(j)}(0) +t^n\chi_1(g,X,t)
\end{aligned}$$
where the function 
$$\begin{aligned}
\chi_1(g,X,t)
&=\frac{1}{(n-1)!}\int\limits_0^1\Bigl((1-s)^{n-1}\phi_{g,X}^{(n)}(ts)
-\frac{1}{n}\phi_{g,X}^{(n)}(0)\Bigr)\de s \\
& =\frac{1}{(n-1)!}\int\limits_0^1\Bigl((1-s)^{n-1}((D^\lambda_X)^n\phi)(gX(ts))
-\frac{1}{n}((D^\lambda_X)^n\phi)(g)\Bigr)\de s 
\end{aligned}$$
has the property $\lim\limits_{t\to 0}\chi_1(g,X,t)=0$ uniformly for $g$ in a suitable neighborhood of an arbitrary point in $G$, since $\phi\in\LUCl^n(G,\Yc)$. 

Assertion \eqref{aux6_item2}  can then be obtained by iterating the formula provided by Assertion~\eqref{aux6_item1}. 
Specifically, by using that formula for $X=X_2$ and $n=1$ we get 
$$(\forall g\in G)(\forall t\in\RR)\quad 
\phi(gX_2(t))=\phi(g)+t(D^\lambda_{X_2}\phi)(g)+t\chi_1(g,X_2,t),$$
hence for arbitrary $g\in G$ and $t\in\RR$ we have 
$$ 
\phi(gX_1(t)X_2(t))=\phi(gX_1(t))+t(D^\lambda_{X_2}\phi)(gX_1(t))+t\chi_1(gX_1(t),X_2,t). $$
On the other hand, by Assertion~\eqref{aux6_item1} for $X=X_1$ and $n=1$ we get 
$$
\phi(gX_1(t))=\phi(g)+t(D^\lambda_{X_1}\phi)(g)+t\chi_1(g,X_1,t),$$
again for all $g\in G$ and $t\in\RR$. 
By plugging in this formula in the previous one, we get 
$$
\phi(gX_1(t)X_2(t))
=\phi(g)+t(D^\lambda_{X_1}\phi)(g)+t(D^\lambda_{X_2}\phi)(g) +t\chi_2(g,X_1,X_2,t)$$
where 
$$\chi_2(g,X_1,X_2,t)=
((D^\lambda_{X_2}\phi)(gX_1(t)-(D^\lambda_{X_2}\phi)(g))+\chi_1(g,X_1,t)+\chi_1(gX_1(t),X_2,t).
$$
Now let $g_0\in G$ arbitrary. 
We have proved above that there exists a neighborhood $V_0$ of $g_0$ such that 
$\lim\limits_{t\to0}\chi_1(g,X,t)=0$ uniformly for $g\in V_0$. 
Moreover, since $D^\lambda_{X_2}\phi\in\LUCl(G,\Yc)$, it follows that 
for a suitable neighborhood $V_1$ of $g_0$ 
we have 
$\lim\limits_{t\to0}((D^\lambda_{X_2}\phi)(gX_1(t)-(D^\lambda_{X_2}\phi)(g))=0$ 
uniformly for $g\in V_1$. 
Moreover, we may assume that $V_1U_1\subseteq V_0$ for a suitable neighborhood $U_1$ of $\1\in G$,  
hence $gX_1(t)\in V_0$ for $g\in V_1$ and $t$ in a suitable neighborhood of $0\in\RR$ (depending only on~$U_1$). 
Then we get 
$\lim\limits_{t\to0}\chi_2(g,X_1,X_2,t)=0$ uniformly for $g\in V_1$, and this completes the proof. 
\end{proof}

The next result is obtained by using the method of proof of 
\cite[\S 4, Th.~1]{Nel69} and \cite[Lemma 2.2]{BC73}. 

\begin{lemma}\label{aux8}
Let $G$ be a topological group and 
$\Yc$ be a 
locally convex space. 
If the one-parameter subgroups $X,X_1,X_2\in\Lg(G)$ have the property $X=X_1+X_2$, then 
$$D^\lambda_X\phi
=D^\lambda_{X_1}\phi+D^\lambda_{X_2}\phi$$
for every $\phi\in\LUCl^1(G,\Yc)$.
\end{lemma}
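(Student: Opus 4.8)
The plan is to fix $g\in G$ and one-parameter subgroups $X,X_1,X_2$ with $X=X_1+X_2$, to put $f:=D^\lambda_{X_1}\phi+D^\lambda_{X_2}\phi$ (a continuous function on $G$, since $\phi\in\LUCl^1(G,\Yc)$ forces $D^\lambda_{X_1}\phi,D^\lambda_{X_2}\phi\in\LUCl(G,\Yc)$, hence continuous), and to prove directly that the difference quotient $R(t):=\frac{\phi(gX(t))-\phi(g)}{t}$ satisfies $\lim_{t\to0}R(t)=f(g)$; this identifies $(D^\lambda_X\phi)(g)$ with $f(g)$ for every $g$. The mechanism, following \cite[\S4, Th.~1]{Nel69} and \cite[Lemma~2.2]{BC73}, is to insert the Trotter-type approximants $P_n(t):=(X_1(t/n)X_2(t/n))^n$, telescope along the partial products, and apply the first-order estimate of Proposition~\ref{aux6}\eqref{aux6_item2} to each factor.

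Concretely, set $g_{n,k}:=g(X_1(t/n)X_2(t/n))^k$ for $0\le k\le n$, so $g_{n,0}=g$ and $g_{n,n}=gP_n(t)$. Applying Proposition~\ref{aux6}\eqref{aux6_item2} at the point $g_{n,k}$ with parameter $t/n$ gives
$$\phi(g_{n,k+1})-\phi(g_{n,k})=\frac{t}{n}f(g_{n,k})+\frac{t}{n}\chi_2\Bigl(g_{n,k},X_1,X_2,\tfrac{t}{n}\Bigr),$$
and summing over $k=0,\dots,n-1$ telescopes the left-hand side to $\phi(gP_n(t))-\phi(g)$. Since $\phi$ is continuous and $P_n(t)\to X(t)$ by \eqref{aux8_eq1}, dividing by $t$ and letting $n\to\infty$ yields
$$R(t)=\lim_{n\to\infty}\Bigl(\underbrace{\frac1n\sum_{k=0}^{n-1}f(g_{n,k})}_{=:A_n(t)}+\underbrace{\frac1n\sum_{k=0}^{n-1}\chi_2\bigl(g_{n,k},X_1,X_2,\tfrac{t}{n}\bigr)}_{=:B_n(t)}\Bigr).$$
It then remains to let $t\to0$ in the right-hand side, which I would do by estimating with an arbitrary continuous seminorm $p$ on $\Yc$.

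For $B_n(t)$ the uniformity in Proposition~\ref{aux6} supplies a neighborhood $V_0$ of $g$ on which $\sup_{h\in V_0}p(\chi_2(h,X_1,X_2,\tau))\to0$ as $\tau\to0$; once all the points $g_{n,k}$ lie in $V_0$, the average $B_n(t)$ is bounded by this supremum at $\tau=t/n$, which tends to $0$ as $n\to\infty$, so $B_n(t)\to0$ and contributes nothing in the limit. For $A_n(t)$, continuity of $f$ at $g$ gives, for any $\epsilon>0$, a neighborhood $W$ of $g$ with $p(f(h)-f(g))<\epsilon$ on $W$; once all $g_{n,k}\in W$ we get $p(A_n(t)-f(g))\le\epsilon$ uniformly in $n$. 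Since $B_n(t)\to0$ we also have $A_n(t)\to R(t)$, so continuity of $p$ forces $p(R(t)-f(g))\le\epsilon$; as $\epsilon$ and $p$ are arbitrary, the claim follows.

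The hard part will be the purely topological input legitimizing the phrases ``once all $g_{n,k}$ lie in $V_0$'' and ``in $W$'': I must show that for every neighborhood $V$ of $\1\in G$ there is $t_0>0$ such that the partial products $(X_1(u/k)X_2(u/k))^k$ lie in $V$ for all $k\ge1$ and all $|u|\le t_0$. Writing $u=kt/n\in[0,t]$, this forces $g_{n,k}=g(X_1(\tfrac{t}{n})X_2(\tfrac{t}{n}))^k$ into any prescribed neighborhood of $g$ by taking $t$ small, uniformly in $k$ and $n$. I would obtain it by combining the uniform convergence in \eqref{aux8_eq1} (which controls $(X_1(u/k)X_2(u/k))^k$ for all large $k$ uniformly on $|u|\le t_0$, together with $X(u)\to\1$ as $u\to0$) with the continuity at $u=0$ of each of the finitely many remaining maps $u\mapsto(X_1(u/k)X_2(u/k))^k$; a standard neighborhood-splitting $V'V'\subseteq V$ in the topological group merges the two regimes. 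This uniform smallness of the partial products is the only genuinely delicate point; granting it, the two seminorm estimates above close the argument.
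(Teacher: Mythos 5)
Your proposal is correct and takes essentially the same route as the paper's own proof: the same Trotter approximants from \eqref{aux8_eq1}, the same telescoping along partial products controlled by the uniform first-order estimate of Proposition~\ref{aux6}\eqref{aux6_item2}, and the same key topological step keeping all partial products $\bigl(X_1(t/n)X_2(t/n)\bigr)^k$ near $\1$ via the rescaling $u=kt/n$, the uniform convergence for large $k$, continuity at $0$ for the finitely many remaining $k$, and a splitting $V'V'\subseteq V$. The only difference is bookkeeping (you pass to the limit $n\to\infty$ at fixed $t$ before letting $t\to0$, while the paper derives one estimate uniform in $n$ and small $t$), which is immaterial.
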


\begin{proof} 
Let us denote 
$$(\forall t\in\RR)\quad g(t)=X_1(t)X_2(t). $$
For each $g_0\in G$ we have to prove the equality
\begin{equation}\label{aux8_proof_eq1}
(D^\lambda_X\phi)(g_0)
=(D^\lambda_{X_1}\phi)(g_0)+(D^\lambda_{X_2}\phi)(g_0).
\end{equation} 
To this end fix an arbitrary continuous seminorm $\vert\cdot\vert$ on $\Yc$ and let $\epsilon>0$ arbitrary. 
Since $D^\lambda_{X_j}\phi\in\Cc(G,\Yc)$ for $j=1,2$, 
there exists a neighborhood $U\in\Vc_G(\1)$ such that 
\begin{equation}\label{aux8_proof_eq2}
(\forall g\in U)\quad 
\vert((D^\lambda_{X_1}+D^\lambda_{X_2})\phi)(g_0g)
-((D^\lambda_{X_1}+D^\lambda_{X_2})\phi)(g_0)\vert<\epsilon.  
\end{equation}
On the other hand, since $\phi\in\LUCl^1(G,\Yc)$, it follows by 
Proposition~\ref{aux6}\eqref{aux6_item2}  
that, by shrinking $U$, we can find $\delta>0$ such that 
for all $x\in g_0U$ and $t\in(-\delta,\delta)\setminus\{0\}$ we have 
\begin{equation}\label{aux8_proof_eq3}
\Bigl\vert\frac{1}{t}(\phi(xg(t))-\phi(x))
-((D^\lambda_{X_1}+D^\lambda_{X_2})\phi)(x)\Bigr\vert
<\frac{\epsilon}{2}. 
\end{equation}
Now let $\delta_1>0$ such that $X(t)\in U_0$ if $-\delta_1\le t\le\delta_1$, 
where $U_0$ is a neighborhood of $\1\in G$ such that $U_0U_0\subseteq U$. 
By using \eqref{aux8_eq1} with uniform convergence on the interval $[-\delta_1,\delta_1]$, we get $n_1\ge 1$ such that 
if $n\ge n_1$ and $-\delta_1\le t\le\delta_1$, 
then $g(t/n)^n\in U$. 
There exists $\delta_2\in(0,\delta_1)$ such that 
if $n= 1,\dots,n_1$ and $-\delta_2\le t\le\delta_2$, 
then $g(t/n)^n\in U$. 
Therefore 
$$g\Bigl(\frac{t}{n}\Bigr)^n\in U
\text{ if } -\delta_2\le t\le\delta_2 
\text{ and }n\ge 1.
$$
Moreover, if $1\le k\le n$ and $\vert t\vert\le\delta_2$, then $\vert(k/n)t\vert\le\vert t\vert\le\delta_2$, 
hence 
$$g\Bigl(\frac{t}{n}\Bigr)^k
=g\Bigl(\frac{(k/n)t}{k}\Bigr)^k
\in U.$$ 
Thus 
$$g\Bigl(\frac{t}{n}\Bigr)^k\in U\text{ if } -\delta_2\le t\le\delta_2 
\text{ and }1\le k\le n.
$$
This allows us to use \eqref{aux8_proof_eq2} and \eqref{aux8_proof_eq3} 
in order to show that if $-\delta_2\le t\le\delta_2$, $t\ne0$,  and $n\ge 1$, then 
\allowdisplaybreaks
\begin{align}
&\Bigl\vert\frac{1}{t}
\Bigl(\phi\Bigl(g_0g\Bigl(\frac{t}{n}\Bigr)^n\Bigr)-\phi(g_0)\Bigr)
-((D^\lambda_{X_1}+D^\lambda_{X_2})\phi)(g_0)\Bigr\vert  \nonumber \\
&\le\frac{1}{n}\sum_{k=1}^n
\Bigl\vert\frac{1}{t/n}
\Bigl(
\phi\Bigl(g_0g\Bigl(\frac{t}{n}\Bigr)^k\Bigr)
-\phi\Bigl(g_0g\Bigl(\frac{t}{n}\Bigr)^{k-1}\Bigr)
\Bigr) 
\hskip-3pt 
-((D^\lambda_{X_1}+D^\lambda_{X_2})\phi)\Bigl(g_0g\Bigl(\frac{t}{n}\Bigr)^{k-1}\Bigr)
\Bigr\vert \nonumber \\
&\hskip10pt +\frac{1}{n}\sum_{k=1}^n
\Bigl\vert 
((D^\lambda_{X_1}+D^\lambda_{X_2})\phi)\Bigl(g_0g\Bigl(\frac{t}{n}\Bigr)^{k-1}\Bigr)
-((D^\lambda_{X_1}+D^\lambda_{X_2})\phi)(g_0)
\Bigr\vert 
\nonumber \\
& <\frac{1}{n}\sum_{k=1}^n\frac{\epsilon}{2}+\frac{1}{n}\sum_{k=1}^n\frac{\epsilon}{2}=\epsilon. \nonumber
\end{align}
On the other hand, for every $t\in\RR$ we have 
$\lim\limits_{n\to\infty} g(t/n)^n=X(t)$ in $G$, by~\eqref{aux8_eq1}. 
Since $\phi\colon G\to\Yc$ is continuous, we then get  
$\lim\limits_{n\to\infty}\phi(g_0g(t/n)^n)=\phi(g_0X(t))$ in~$\Yc$. 
It then follows by the above estimates that if $-\delta_2\le t\le\delta_2$ and $t\ne0$, 
then 
$$\Bigl\vert\frac{1}{t}
(\phi(g_0X(t))-\phi(g_0))
-((D^\lambda_{X_1}+D^\lambda_{X_2})\phi)(g_0)\Bigr\vert\le\epsilon.$$ 
For $t\to 0$ and then $\epsilon\to0$ we get  
$$\vert(D^\lambda_X\phi)(g_0)
-((D^\lambda_{X_1}\phi)(g_0)+(D^\lambda_{X_2}\phi)(g_0)) \vert=0$$
Since $\vert\cdot\vert$ is an arbitrary continuous seminorm on 
the Hausdorff locally convex space~$\Yc$, it follows that
\eqref{aux8_proof_eq1} holds. 
\end{proof}

\begin{theorem}\label{lin_th}
Let $G$ be a topological group such that $\Lg(G)$ has a structure of 
real vector space 
whose scalar multiplication and vector addition 
satisfy the following conditions for all 
$t,s\in{\mathbb R}$ and $X_1,X_2\in\Lg(G)$: 
\begin{eqnarray} 
(t X_1)(s) &=& X_1(ts),   \nonumber \\
(X_1+X_2)(t) &=& \lim\limits_{n\to\infty}
\Bigl(X_1\Bigl(\frac{t}{n}\Bigr)X_2\Bigl(\frac{t}{n}\Bigr)\Bigr)^n, 
\nonumber
\end{eqnarray}
where the convergence is assumed to be uniform on any compact subset of $\RR$.  
Then for every locally convex space $\Yc$ and every 
$\phi\in\LUCl^1(G,\Yc)$ the mapping 
$$D^\lambda\phi\colon\Lg(G)\to\LUCl(G\Yc),\quad X\mapsto D^\lambda_X\phi$$
is linear. 
\end{theorem}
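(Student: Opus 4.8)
The plan is to split the asserted linearity into homogeneity, $D^\lambda_{tX}\phi = t\,D^\lambda_X\phi$, and additivity, $D^\lambda_{X_1+X_2}\phi = D^\lambda_{X_1}\phi + D^\lambda_{X_2}\phi$, and to exploit the fact that the hard half—additivity—has already been packaged into Lemma~\ref{aux8}. First I would fix $\phi\in\LUCl^1(G,\Yc)$ and note that, by Definition~\ref{aux4}\eqref{aux4_item3}, the mapping $D^\lambda\phi$ is well defined and $D^\lambda_Y\phi\in\LUCl(G,\Yc)$ for every $Y\in\Lg(G)$; thus each directional limit exists and the map $X\mapsto D^\lambda_X\phi$ does land in $\LUCl(G,\Yc)$, so it remains only to verify compatibility with the two vector-space operations.

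For additivity I would argue as follows. Given $X_1,X_2\in\Lg(G)$, write $X:=X_1+X_2$ for the sum in the assumed vector-space structure. The second structural hypothesis states exactly that $X(t)=\lim\limits_{n\to\infty}\bigl(X_1(t/n)X_2(t/n)\bigr)^n$ uniformly on compact subsets of $\RR$, which is verbatim the relation $X=X_1+X_2$ in the sense of \eqref{aux8_eq1}. Consequently Lemma~\ref{aux8} applies directly and delivers $D^\lambda_{X_1+X_2}\phi=D^\lambda_{X_1}\phi+D^\lambda_{X_2}\phi$ with no additional work.

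For homogeneity I would compute the defining limit directly. Fixing $t\in\RR$, $X\in\Lg(G)$, and $g\in G$, the first structural hypothesis $(tX)(s)=X(ts)$ gives
\[
(D^\lambda_{tX}\phi)(g)=\lim_{s\to0}\frac{\phi(g(tX)(s))-\phi(g)}{s}=\lim_{s\to0}\frac{\phi(gX(ts))-\phi(g)}{s}.
\]
When $t\neq0$, the substitution $u=ts$ turns the right-hand side into $t\lim\limits_{u\to0}\frac{\phi(gX(u))-\phi(g)}{u}=t\,(D^\lambda_X\phi)(g)$, the latter limit existing because $\phi\in\LUCl^1(G,\Yc)$. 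When $t=0$ I would treat the degenerate case by hand: since $X$ is a homomorphism, $(0\cdot X)(s)=X(0)=\1$ for all $s$, so each difference quotient vanishes and $(D^\lambda_{0\cdot X}\phi)(g)=0=0\cdot(D^\lambda_X\phi)(g)$. As $g$ is arbitrary, this yields $D^\lambda_{tX}\phi=t\,D^\lambda_X\phi$.

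Together these two facts give the linearity of $X\mapsto D^\lambda_X\phi$. I expect no genuine obstacle in this argument, precisely because the one delicate point—passing to the limit in the $n$-fold Trotter products while keeping the error terms of Proposition~\ref{aux6}\eqref{aux6_item2} uniformly small—has already been absorbed into Lemma~\ref{aux8}; here it is merely invoked once the structural hypothesis is matched against \eqref{aux8_eq1}. The only residual subtlety is the elementary need to handle $t=0$ separately in the homogeneity computation, since the change of variables $u=ts$ is unavailable there.
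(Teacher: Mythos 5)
Your proposal is correct and follows essentially the same route as the paper, whose proof consists precisely of observing that $\RR$-homogeneity is an easy direct check and that additivity is exactly Lemma~\ref{aux8} applied to the hypothesis, which is verbatim relation~\eqref{aux8_eq1}. Your only addition is to write out the elementary homogeneity computation (including the $t=0$ case) that the paper leaves as ``easily checked,'' and that computation is sound.
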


\begin{proof}
It is easily checked that the mapping $D^\lambda\phi$ is $\RR$-homogeneous, 
and the fact that it is additive follows by Lemma~\ref{aux8}. 
\end{proof}

We now establish some continuity properties of the linear mappings provided by the above theorem. 
The proof of the following corollary was suggested by the proof of \cite[Prop. 2]{Mag81}.

\begin{corollary}\label{lin_cont}
Assume the setting of Theorem~\ref{lin_th} along with the following additional hypotheses: 
\begin{enumerate}
\item $\Lg(G)$ is endowed with a Baire topology that is stronger than the compact-open topology and is compatible with the vector space structure. 
\item The locally convex space $\Yc$ is metrizable. 
\end{enumerate}
If $\LUCl(G,\Yc)$ is endowed with the topology of pointwise convergence, 
then for every $\phi\in\LUCl^1(G,\Yc)$ the mapping $D^\lambda\phi\colon\Lg(G)\to\LUCl(G,\Yc)$ is linear and continuous. 
\end{corollary}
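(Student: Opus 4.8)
The plan is to exploit the fact that, once $\RR$-linearity is known from Theorem~\ref{lin_th}, continuity of a linear map into a space carrying the topology of pointwise convergence can be checked one coordinate at a time. Concretely, the topology of pointwise convergence on $\LUCl(G,\Yc)$ is the initial topology determined by the evaluation maps $\ev_g\colon\psi\mapsto\psi(g)$ for $g\in G$, so the map $D^\lambda\phi\colon\Lg(G)\to\LUCl(G,\Yc)$ is continuous if and only if for every fixed $g\in G$ the composition $h_g\colon\Lg(G)\to\Yc$, $h_g(X)=(D^\lambda_X\phi)(g)$, is continuous. Each $h_g$ is $\RR$-linear, being the composition of the linear map provided by Theorem~\ref{lin_th} with the linear evaluation $\ev_g$, so it suffices to prove that each $h_g$ is continuous.

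First I would show that $h_g$ is a pointwise limit of a sequence of continuous functions. For $t\neq0$ set $F_t(X):=\frac1t(\phi(gX(t))-\phi(g))$. The map $X\mapsto X(t)$ is continuous because the topology on $\Lg(G)$ is assumed stronger than the compact-open topology; composing with the (continuous) left translation by $g$ on $G$ and with $\phi$, which is continuous as an element of $\LUCl(G,\Yc)$, shows that each $F_t$ is continuous on $\Lg(G)$. Since $\phi\in\LUCl^1(G,\Yc)$, the limit defining $(D^\lambda_X\phi)(g)$ exists as $t\to0$, so in particular $h_g(X)=\lim_{n\to\infty}F_{1/n}(X)$ pointwise, and thus $h_g$ is of Baire class~$1$.

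Next I would invoke the classical theorem that a Baire class~$1$ map from a Baire space into a metric space has a comeager set of points of continuity. Here $\Lg(G)$ is Baire by hypothesis and $\Yc$ is metrizable, so the set of continuity points of $h_g$ is a dense $G_\delta$, in particular nonempty; hence $h_g$ is continuous at some point $X_0\in\Lg(G)$. Because $\Lg(G)$ is a topological vector space, its topology being compatible with the linear structure, and $h_g$ is additive, continuity at the single point $X_0$ propagates to continuity everywhere: if $X\to X'$ then $X-X'+X_0\to X_0$, and additivity gives $h_g(X)=h_g(X-X'+X_0)-h_g(X_0)+h_g(X')\to h_g(X')$. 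This yields continuity of each $h_g$, hence of $D^\lambda\phi$.

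I expect the main obstacle to be the interplay in the last two steps: representing $h_g$ as a pointwise limit of continuous functions and then upgrading ``$h_g$ is continuous at one point'' to ``$h_g$ is continuous everywhere.'' The Banach--Steinhaus principle is not directly available, since the approximating functions $F_t$ are not linear in $X$; the argument instead hinges on combining the Baire category theorem, through the Baire class~$1$ continuity-point result, which is precisely where the Baire hypothesis on $\Lg(G)$ and the metrizability of $\Yc$ enter, with the additivity of $h_g$ furnished by Theorem~\ref{lin_th}.
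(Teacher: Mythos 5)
Your proposal is correct and follows essentially the same route as the paper's proof: reduce continuity to the evaluations $X\mapsto(D^\lambda_X\phi)(g)$, write each as a pointwise limit of a sequence of difference quotients that are continuous for the compact-open (hence for the stronger Baire) topology, invoke the Baire-category result that such a limit into a metrizable space has a continuity point (the paper cites \cite[Ch.~IX, \S~5, Ex.~20b)]{Bou74}), and then use additivity together with the topological vector space structure of $\Lg(G)$ to propagate continuity from one point to all of $\Lg(G)$. You merely make explicit some details the paper leaves implicit, such as the continuity of $X\mapsto X(t)$ and the non-applicability of Banach--Steinhaus to the nonlinear approximants.
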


\begin{proof}
Let $\phi\in\LUCl^1(G,\Yc)$. 
The linearity of $D^\lambda\phi$ follows by Theorem~\ref{lin_th}.  
To prove the continuity property, let $g\in G$ arbitrary. 
It follows by \eqref{aux4_eq1} that the mapping  
$(D^\lambda\phi)(g)\colon\Lg(G)\to\Yc$ 
is the pointwise limit of a sequence of mappings which are continuous with respect to the compact-open topology of $\Lg(G)$, 
hence are also continuous with respect to the Baire topology mentioned in the statement. 
Since $\Yc$ is metrizable and $\Lg(G)$ is a Baire space, it then follows by \cite[Ch. IX, \S 5, Ex. 20b)]{Bou74} that 
the set of discontinuity points of $(D^\lambda\phi)(g)$ is of the first category, 
hence $(D^\lambda\phi)(g)$ has at least one continuity point. 
Since moreover $\Lg(G)$ is a topological vector space, then it easily follows that the linear mapping $(D^\lambda\phi)(g)$ is continuous throughout $\Lg(G)$. 
\end{proof}

\begin{examples}\label{lin_ex}
\normalfont
Here is a list of special classes of topological groups to which Theorem~\ref{lin_th} applies, 
besides the classical case of the finite-dimensional Lie groups 
(which is covered by several of the following situations):
\begin{enumerate} 

\item\label{lin_ex_item3} Locally exponential Lie groups; 
in particular, the Banach-Lie groups.   
A locally convex Lie group $G$ with the Lie algebra $\gg$ is locally exponential if it has a smooth exponential map $\exp_G\colon\gg\to G$ which is a local diffeomorphism at $0\in\gg$ (\cite[Def.~IV.1.1]{Nee06}).  
For $x\in\gg$ let $\gamma_x\colon\RR\to G$, $\gamma_x(t)=\exp_G(tx)$. 
Then the mapping 
$x\mapsto \gamma_x$
is a homeomorphism $\gg\to\Lg(G)$ and for $x,y\in\gg$ we have $\gamma_{x+y}=\gamma_x+\gamma_y$.  
The right-hand side of the latter equation is defined by \eqref{aux8_eq1}, 
and the corresponding uniform convergence on compact subsets of $\RR$ can be obtained by using Taylor's formula; 
compare \cite[Rem. II.1.8 and Lemma~IV.1.17]{Nee06}. 
Thus $\Lg(G)$ with the compact-open topology is turned into a locally convex space isomorphic to~$\gg$. 

\item\label{lin_ex_item3.5} Mapping groups, and in particular loop groups.  
If $M$ is a compact manifold and $K$ is a finite-dimensional Lie group with the Lie algebra $\kg$, then it follows as a very special case of \cite[Th.~IV.1.12]{Nee06} that the mapping group $\Ci(M,K)$ is a locally exponential Lie group with the Lie algebra $\Ci(M,\kg)$, 
hence we are actually placed in the above situation \eqref{lin_ex_item3}. 
The loop groups are obtained when $M$ is the unit circle. 

\item\label{lin_ex_item3.6} Diffeomorphism groups of compact manifolds. 
If $G$ is the diffeomorphism group of a compact manifold $M$ 
(\cite[Ex. 1.4]{Mil84}, \cite[Ex. II.3.14]{Nee06}), 
then it follows by \cite[Th. 5]{CM70}  
that every continuous one-parameter subgroup of $G$ is smooth. 
The sum of vector fields agrees with the operation defined by~\eqref{aux8_eq1} 
(see for instance \cite[\S 4, Th.~1]{Nel69}), 
hence $\Lg(G)$ can thus be identified with the linear space of all smooth vector fields on~$M$.  

\item\label{lin_ex_item1} Connected locally compact groups;  
more generally, the connected pro-Lie groups.  
A pro-Lie group $G$ is a topological group 
which is isomorphic (as a topological group) to the limit of 
a projective system $\{G_j\}_{j\in J}$ of finite-dimensional Lie groups. 
The continuity property of the functor $\Lg(\cdot)$ established in \cite[Th.~2.25(ii)]{HM07} shows that  $\Lg(G)$ is isomorphic as a topological Lie algebra to the projective limit of the Lie algebras of the groups $G_j$ for $j\in J$; see also \cite[Lemma~2.1]{BC73}. 
We also note that $G$ is isomorphic to a closed subgroup of a direct product of finite-dimensional Lie groups (\cite[Th.~3.39]{HM07}), 
hence in the limits of the type \eqref{aux8_eq1} we have uniform convergence 
on the compact subsets of $\RR$, by the definition of the direct product topology along with the corresponding uniform convergence in the finite-dimensional Lie groups 
(see the situation \eqref{lin_ex_item3} above). 

\item\label{lin_ex_item4} Unitary groups of finite von Neumann algebras. 
Consider a von Neumann algebra with a faithful normal tracial state~$\tau$.  If $G$ is its unitary group endowed with the strong operator topology, then 
it follows by \cite[Th.~3.6]{Bel10} that $\Lg(G)$ with the sum operation given by \eqref{aux8_eq1} is a vector space isomorphic to the space of all skew-symmetric $\tau$-measurable operators on the space of the GNS representation associated with~$\tau$.  

\item\label{lin_ex_item3.4} Direct limits of finite-dimensional Lie groups. 
If $G$ is the limit of a countable direct system of finite-dimensional Lie groups with injective homomorphisms, 
then $\Lg(G)$ is isomorphic as a topological Lie algebra to the corresponding inductive limit of finite-dimensional Lie algebras. 
Moreover, it follows by  
\cite[Prop. 4.6]{Glo05} and its proof that 
for every $X,Y\in\Lg(G)$ there exists $X+Y\in\Lg(G)$ given by \eqref{aux8_eq1} with uniform convergence 
on the compact subsets of $\RR$. 

\item\label{lin_ex_item2} Nilpotent topological groups.  
If $G$ is such a group, then it follows by \cite[Th. 1]{MS75} that 
the topology of any subgroup generated by finitely many elements in $\Lg(G)$ can be refined to a unique topology of a finite-dimensional Lie group. 
Therefore, one can use the situation of finite-dimensional Lie groups 
to see that for every $X,Y\in\Lg(G)$ there exists $X+Y\in\Lg(G)$ given by \eqref{aux8_eq1} with uniform convergence 
on the compact subsets of $\RR$, 
and moreover $\Lg(G)$ is thus turned into a vector space.  
See also \cite[Th.~IV.1.24]{Nee06} for a related result on 2-step nilpotent groups.   

\end{enumerate} 
\end{examples}

We now show that Corollary~\ref{lin_cont} applies and leads to continuity properties in several of the situations mentioned in Examples~\ref{lin_ex}. 

\begin{examples}\label{cont_ex}
\normalfont 
Here we use the same numbering and notation as in Examples~\ref{lin_ex}.  
Continuity properties can be established in the situations \eqref{lin_ex_item3}--\eqref{lin_ex_item4}.
\begin{itemize}

\item[\eqref{lin_ex_item3}] If $G$ is a locally exponential Lie group whose Lie algebra is a Baire space, then $\Lg(G)$ is in particular a Baire space with respect to the compact-open topology. 

\item[\eqref{lin_ex_item3.5}] 
If $G=\Ci(M,K)$, then its Lie algebra $\gg=\Ci(M,\kg)$ is a Fr\'echet space, 
hence we are placed in the above situation.

\item[\eqref{lin_ex_item3.6}] 
If $G$ is the diffeomorphism group of a compact manifold $M$, then we have already seen  that $\Lg(G)$ can be identified with the vector space $\Vc(M)$ of all smooth vector fields on~$M$. 
Moreover, $\Vc(M)$ is a Fr\'echet space and it follows by \cite[Rem. II.5.3, Rem. III.2.5, and Th. III.3.1]{Nee06} that the exponential map $\exp_G\colon\Vc(M)\to G$ 
is smooth.  
In particular, the mapping $\Psi\colon\RR\times\Vc\to G$, $\Psi(t,x)=\exp_G(tx)$, is continuous.  
Then, by using the fact that $\RR$ is locally compact, 
we get by a straightforward reasoning that the mapping $\Vc\to\Cc(\RR,G)$, $x\mapsto\gamma_x(\cdot):=\Psi(\cdot,x)$ 
is continuous with respect to the compact-open topology on $\Cc(\RR,G)$.  
Therefore the mapping 
$x\mapsto \gamma_x$
is continuous from $\Vc(M)$ into $\Lg(G)$,  
and this shows that 
the Fr\'echet topology induced from $\Vc(M)$ on $\Lg(G)$ is stronger than the compact-open topology. 

\item[\eqref{lin_ex_item1}] 
If $G$ is a connected pro-Lie group, then by \cite[Th.~3.12, Prop. 3.8, and Cor. A2.9]{HM07} we get a linear topological isomorphism of $\Lg(G)$ 
onto $\RR^J$ for a suitable set~$J$, so $\Lg(G)$ is a Baire space 
with respect to the compact-open topology by 
\cite[Ch. IX, \S 5, Ex. 16a)]{Bou74}. 

\item[\eqref{lin_ex_item4}] 
If $G$ is the unitary group of a von Neumann algebra with a finite trace~$\tau$, then $\Lg(G)$ with the compact-open topology of is homeomorphic (by the linear isomorphism) to the aforementioned space of measurable operators endowed with the $\tau$-measure topology, which is linear and defined by a complete metric that is invariant under translations; see for instance \cite[Rem. 2.4]{Bel10}. 
Thus $\Lg(G)$ is a Baire space with respect to the compact-open topology. 

\end{itemize}  
So in any of the above cases it follows by Corollary~\ref{lin_cont} that if $\Yc$ is a metrizable locally convex space and $\phi\in\LUCl^1(G,\Yc)$, then $D^\lambda\phi\colon\Lg(G)\to\Yc$ is a linear continuous mapping. 
We thus get a possible approach to \cite[Probl. 13.4]{Nee10}, which in particular asked for conditions ensuring that if $G$ is a Banach-Lie group 
with the Lie algebra~$\gg$, and $\phi\colon G\to\RR$ is a function such that for some $g\in G$ and every $x\in\gg$ the derivative 
$$\de\phi(g)(g.x):=\frac{\de}{\de t}\Bigl\vert_{t=0}\phi(g\exp_G(tx)) $$ 
exists, then the functional $\de\phi(g)\colon\gg\to\RR$ is linear. 
\end{examples}

\begin{remark}
\normalfont 
The method of proof of Lemma~\ref{aux8} also gives the following result: 
Let $G$ be a topological group and $\Yc$ be a locally convex space. 
If $X,X_1,X_2\in\Lg(G)$ have the property that   
\begin{equation*}
(\forall t\in\RR)\quad 
X(t^2) = \lim\limits_{n\to\infty}
\Bigl(X_1\Bigl(\frac{t}{n}\Bigr)
X_2\Bigl(\frac{t}{n}\Bigr)
X_1\Bigl(\frac{t}{n}\Bigr)^{-1}
X_2\Bigl(\frac{t}{n}\Bigr)^{-1}\Bigr)^{n^2} 
\end{equation*}
uniformly on every compact subset of $\RR$, then 
$$D^\lambda_X\phi
=D^\lambda_{X_1}(D^\lambda_{X_2}\phi)-D^\lambda_{X_2}(D^\lambda_{X_1}\phi)$$
for every $\phi\in\LUCl^2(G,\Yc)$.
\end{remark}

\section{Differentiable vectors}

In this section we apply Theorem~\ref{lin_th} and Remark~\ref{lin_cont} to the study of differentiable vectors in group representations. 
We shall use the notation set up in the Introduction, and the main result is Theorem~\ref{final}. 

\begin{setting}\label{set}
\normalfont
Unless otherwise mentioned, we assume the following: 
\begin{itemize}
\item $G$ is a topological group; 
\item $\Yc$ is a locally convex space and $\End(\Yc)$ stands for the space of continuous linear maps on $\Yc$; 
\item $\pi\colon G\to\End(\Yc)$ is a representation  such that the group action 
$$G\times\Yc\to\Yc, (g,y)\mapsto\pi(g)y $$ 
is a continuous mapping; 
\item there exists a neighborhood $V$ of $\1\in G$ such that the set of operators $\pi(V)$ is equi-continuous on $\Yc$.
\end{itemize}
For instance, these assumptions are satisfied if the aforementioned action is continuous and $\Yc$ is a Banach space; see \cite[Lemma 5.2]{Nee10}. 
\end{setting}

\begin{definition}\label{t5}
\normalfont
Let us denote $\Dc_{\de\pi}^0:=\Yc$. 
In addition to the spaces  
$\Dc_{\de\pi}^1$, $\Dc_{\de\pi}^{1,\lin}$, and $\Dc_{\de\pi}^{1,\cont}$ defined in the Introduction, we also define 
inductively for every $k\ge 1$,
$$\Dc_{\de\pi}^{k+1}=\{y\in\Dc_{\de\pi}^1\mid(\forall X\in\Lg(G))\quad \de\pi(X)y\in\Dc_{\de\pi}^k\}. $$
It is clear that $\Dc_{\de\pi}^0\supseteq\Dc_{\de\pi}^1\supseteq\Dc_{\de\pi}^2\supseteq\cdots$, and we define
$$\Dc_{\de\pi}^\infty=\bigcap_{k\ge 1}\Dc_{\de\pi}^k. $$
The spaces 
$$\begin{aligned}
\Ci(\de\pi):=&\bigcap_{X\in\Lg(G)}\Bigl(\bigcap_{k\ge 1}\Dc(\de\pi(X)^k)\Bigr), \\
\Yc_\infty:=&\{y\in\Yc\mid \pi(\cdot)y\in\Ci(G,\Yc)\}
\end{aligned}$$
will also be needed. 
\end{definition}

\begin{remark}
\normalfont 
It is easily seen that  $\Ci(\de\pi)$, $\Yc_\infty$, and $\Dc_{\de\pi}^k$ for $k=0,1,\dots,\infty$ are linear subspaces of $\Yc$ which are invariant under 
the family of operators $\pi(G)$. 
We have 
$$\Yc_\infty\subseteq \Dc_{\de\pi}^\infty\subseteq\Ci(\de\pi)\subseteq\Dc_{\de\pi}^1$$
and the first two of these inclusions are actually equalities if $G$ is a finite-dimensional Lie group, as a consequence of Goodman's theorem; 
see \cite[subsect. 4.4.4]{Wa72} for a broader discussion. 
The space $\Yc_\infty$ for representations of topological groups was introduced in \cite{Bos76}, the definition of $\Ci(\de\pi)$ agrees with \cite[Def. 1]{Mag81} for locally compact groups, and finally the spaces $\Dc_{\de\pi}^k$ for $k=0,1,\dots,\infty$ were introduced in \cite[Def. 3.1]{Nee10} for representations of locally convex Lie groups.
\end{remark}

\begin{lemma}\label{t5.5}
We have 
$\Dc_{\de\pi}^k
=\{y\in\Yc\mid\pi(\cdot)y\in\LUCl^k(G,\Yc)\}$ for every $k=0,1,\dots,\infty$. 
\end{lemma}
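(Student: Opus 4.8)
The plan is to transport everything to the level of orbit maps $\phi=\pi(\cdot)y\colon G\to\Yc$, $g\mapsto\pi(g)y$, and to exploit a single commutation formula between the infinitesimal generators $\de\pi(X)$ and the directional derivatives $D^\lambda_X$. The engine is the computation $\phi(gX(t))=\pi(g)\pi(X(t))y$: since each $\pi(g)\in\End(\Yc)$ is continuous, linear, and invertible, it commutes with the limit in \eqref{aux4_eq1}, so that $y\in\Dc(\de\pi(X))$ holds if and only if $(D^\lambda_X\phi)(g)$ exists for one (equivalently every) $g\in G$, and in that case
\[
D^\lambda_X(\pi(\cdot)y)=\pi(\cdot)(\de\pi(X)y),\qquad (D^\lambda_X(\pi(\cdot)y))(g)=\pi(g)\,\de\pi(X)y.
\]
Iterating, one obtains $D^\lambda_{X_1}\cdots D^\lambda_{X_k}(\pi(\cdot)y)=\pi(\cdot)\bigl(\de\pi(X_1)\cdots\de\pi(X_k)y\bigr)$ whenever the intermediate vectors lie in the relevant domains, which is exactly the link needed to match $\Dc_{\de\pi}^k$ with $\LUCl^k(G,\Yc)$.

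First I would settle the base case $k=0$, i.e. that $\Dc_{\de\pi}^0=\Yc$ coincides with $\{y\mid\pi(\cdot)y\in\LUCl(G,\Yc)\}$; concretely, that every orbit map is locally left uniformly continuous. This is the one step that consumes the equi-continuity hypothesis of Setting~\ref{set}. Since $\LUCl(G,\Yc)$ is invariant under left translations and $\pi(g_0\,\cdot)y=\pi(g_0)\circ(\pi(\cdot)y)$, with post-composition by the continuous linear map $\pi(g_0)$ preserving $\LUCl(G,\Yc)$, it suffices to verify the condition of Definition~\ref{aux4}\eqref{aux4_item3} at $g_0=\1$. There I would write $\phi(a)-\phi(b)=\pi(a)\bigl(y-\pi(a^{-1}b)y\bigr)$ and take for $V$ the neighborhood on which $\pi(V)$ is equi-continuous: given a $0$-neighborhood $U$, equi-continuity yields a $0$-neighborhood $U'$ with $\pi(a)U'\subseteq U$ for all $a\in V$, while continuity of the action at $(\1,y)$ yields a neighborhood $W$ of $\1$ with $y-\pi(w)y\in U'$ for $w\in W$; these combine to give the required estimate.

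With the base case and the commutation formula in place, I would prove $y\in\Dc_{\de\pi}^k\iff\pi(\cdot)y\in\LUCl^k(G,\Yc)$ by induction on $k$. In the forward direction, if $y\in\Dc_{\de\pi}^{k+1}$ then $\de\pi(X)y\in\Dc_{\de\pi}^k$ for every $X$, so by the inductive hypothesis $D^\lambda_X(\pi(\cdot)y)=\pi(\cdot)(\de\pi(X)y)\in\LUCl^k(G,\Yc)$; together with $\pi(\cdot)y\in\LUCl^k(G,\Yc)$ and the iterated formula, this gives $\pi(\cdot)y\in\LUCl^{k+1}(G,\Yc)$. For the converse I would first record the auxiliary stability property that $\phi\in\LUCl^n(G,\Yc)$ implies $D^\lambda_X\phi\in\LUCl^{n-1}(G,\Yc)$ for each $X$, which is itself a short induction straight from Definition~\ref{aux4}\eqref{aux4_item3}. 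Granting this, $\pi(\cdot)y\in\LUCl^{k+1}(G,\Yc)$ forces $D^\lambda_X(\pi(\cdot)y)=\pi(\cdot)(\de\pi(X)y)\in\LUCl^k(G,\Yc)$, hence $\de\pi(X)y\in\Dc_{\de\pi}^k$ by the inductive hypothesis; as this holds for all $X$ and $y\in\Dc_{\de\pi}^1$, we conclude $y\in\Dc_{\de\pi}^{k+1}$. The case $k=\infty$ then follows at once by intersecting over all finite $k$, since both $\Dc_{\de\pi}^\infty$ and $\LUCl^\infty(G,\Yc)$ are defined as such intersections.

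I expect the main obstacle to be the base case, where the local left uniform continuity of the orbit maps has to be extracted from the equi-continuity of $\pi(V)$ rather than taken for granted; this is precisely where the hypotheses of Setting~\ref{set} enter, and without them the identity can fail. The inductive step is then largely bookkeeping, the only genuine subtlety being the auxiliary hierarchy-stability lemma needed to run the converse implication.
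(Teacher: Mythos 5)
Your proof is correct and takes essentially the same route as the paper's: the base case $k=0$ is extracted from the equi-continuity of $\pi(V)$ (your identity $\phi(a)-\phi(b)=\pi(a)\bigl(y-\pi(a^{-1}b)y\bigr)$ is the paper's $\pi(gg_0h)y-\pi(gg_0)y=\pi(g)\pi(g_0)(\pi(h)y-y)$ after your harmless reduction to $g_0=\1$), and the commutation formula $D^\lambda_X(\pi(\cdot)y)=\pi(\cdot)\de\pi(X)y$ then drives an induction on $k$, exactly as in the paper. The only difference is one of detail: you make explicit the recursive characterization of $\LUCl^{k+1}(G,\Yc)$ and the stability property $\phi\in\LUCl^{n}(G,\Yc)\Rightarrow D^\lambda_X\phi\in\LUCl^{n-1}(G,\Yc)$, which the paper compresses into ``the general case follows by induction on $k$.''
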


\begin{proof} 
For $k=0$ we have to prove that for arbitrary $y\in\Yc$, 
\begin{equation}\label{t5.5_proof_eq1}
 \pi(\cdot)y\in\LUCl(G,\Yc).
 \end{equation}
In fact, for arbitrary $g_0,h\in G$ and $g\in V$ we have 
$$\pi(gg_0h)y-\pi(gg_0)y=\pi(g)\pi(g_0)(\pi(h)y-y).$$
Since the family of operators $\pi(V)$ is equi-continuous on $\Yc$, 
it easily follows by the above equality that for 
every neighborhood $U$ of $0\in\Yc$ there exists a neighborhood 
$V_1$ of $\1\in V$ such that 
for all $h\in V_1$ and $g\in V$ we have $\pi(gg_0h)y-\pi(gg_0)y\in U$. 
This shows that $\pi(\cdot)y$ is left uniformly continuous on the neighborhood $Vg_0$ of $g_0\in V$, and \eqref{t5.5_proof_eq1} is completely proved. 

If $X\in\Lg(G)$, then $y\in\Dc(\de\pi(X))$ if and only if 
the function $\pi(X(\cdot))y$ is differentiable at $0\in\RR$. 
If this the case, then 
\begin{equation}\label{t5.5_proof_eq2}
D^\lambda_X(\pi(\cdot)y)=\pi(\cdot)\de\pi(X)y
\end{equation} 
and now \eqref{t5.5_proof_eq1} shows that $D^\lambda_X(\pi(\cdot)y)\in\LUCl(G,\Yc)$. 
These remarks show that the assertion holds for $k=1$. 
The general case follows by induction on $k$. 
\end{proof}

\begin{theorem}\label{final}
Within Setting~\ref{set} we have $\Dc_{\de\pi}^1=\Dc_{\de\pi}^{1,\lin}$.  
If the following additional conditions are satisfied:
\begin{enumerate} 
\item $\Lg(G)$ is endowed with a Baire topology that is stronger than the compact-open topology; 
\item for every $X_1,X_2\in \Lg(G)$ there exists $X\in\Lg(G)$ such that 
the property $X=X_1+X_2$ holds true, and $\Lg(G)$ is a topological vector space with the vector sum thus defined; 
\item the locally convex space $\Yc$ is metrizable; 
\end{enumerate}
then we have also $\Dc_{\de\pi}^1=\Dc_{\de\pi}^{1,\cont}$. 
\end{theorem}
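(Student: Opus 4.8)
The plan is to reduce both equalities to the function-theoretic results of the previous section by means of Lemma~\ref{t5.5}, which translates membership in $\Dc_{\de\pi}^1$ into the smoothness class $\LUCl^1(G,\Yc)$, together with the identity \eqref{t5.5_proof_eq2} and the observation that evaluation at $\1\in G$ recovers $\de\pi(X)y$ from $D^\lambda_X(\pi(\cdot)y)$. Since $\Dc_{\de\pi}^{1,\lin}$ and $\Dc_{\de\pi}^{1,\cont}$ are by definition contained in $\Dc_{\de\pi}^1$, in each case only the reverse inclusion requires proof.

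For the first equality, fix $y\in\Dc_{\de\pi}^1$ and set $\phi:=\pi(\cdot)y$, which belongs to $\LUCl^1(G,\Yc)$ by Lemma~\ref{t5.5}. If $X,X_1,X_2\in\Lg(G)$ satisfy $X=X_1+X_2$, then Lemma~\ref{aux8} gives $D^\lambda_X\phi=D^\lambda_{X_1}\phi+D^\lambda_{X_2}\phi$. By \eqref{t5.5_proof_eq2} each of these three terms equals $\pi(\cdot)$ applied to the corresponding vector, so evaluating the resulting identity of functions at $g=\1$ and using $\pi(\1)=\id$ yields $\de\pi(X)y=\de\pi(X_1)y+\de\pi(X_2)y$. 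Hence $y\in\Dc_{\de\pi}^{1,\lin}$, and therefore $\Dc_{\de\pi}^1=\Dc_{\de\pi}^{1,\lin}$.

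For the second equality I would first check that hypotheses (1)--(3) place us in the setting of Corollary~\ref{lin_cont}: condition (2) supplies the vector space structure on $\Lg(G)$ with vector addition given by \eqref{aux8_eq1} and the canonical scalar multiplication $(tX)(s)=X(ts)$, so that the hypotheses of Theorem~\ref{lin_th} hold; condition (1) provides the Baire topology, stronger than the compact-open one and compatible with that structure; and condition (3) is the metrizability of $\Yc$. Then, for $y\in\Dc_{\de\pi}^1$ and $\phi:=\pi(\cdot)y\in\LUCl^1(G,\Yc)$, Corollary~\ref{lin_cont} shows that $D^\lambda\phi\colon\Lg(G)\to\LUCl(G,\Yc)$ is continuous when $\LUCl(G,\Yc)$ carries the topology of pointwise convergence. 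Composing with the evaluation map $\LUCl(G,\Yc)\to\Yc$ at $\1$, which is continuous for that topology, and invoking \eqref{t5.5_proof_eq2} once more, the mapping $X\mapsto(D^\lambda_X\phi)(\1)=\de\pi(X)y$ is continuous; that is, $\de\pi(\cdot)y\in\Cc(\Lg(G),\Yc)$. Thus $y\in\Dc_{\de\pi}^{1,\cont}$, giving $\Dc_{\de\pi}^1=\Dc_{\de\pi}^{1,\cont}$.

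I expect the only real subtlety, rather than a genuine obstacle, to lie in matching the abstract hypotheses of Corollary~\ref{lin_cont}: one must confirm that the topological vector space structure assumed in (2) carries the canonical scalar multiplication, so that the $\RR$-homogeneity of $D^\lambda\phi$ exploited in Theorem~\ref{lin_th} is indeed available. Once the dictionary furnished by Lemma~\ref{t5.5} and \eqref{t5.5_proof_eq2} is in place, the hard analytic work has already been carried out in Lemma~\ref{aux8} and Corollary~\ref{lin_cont}, and the proof of Theorem~\ref{final} becomes essentially a transcription of those results into the language of differentiable vectors.
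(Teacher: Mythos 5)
Your proposal is correct and follows essentially the same route as the paper's own proof: Lemma~\ref{t5.5} to pass from $y\in\Dc_{\de\pi}^1$ to $\phi=\pi(\cdot)y\in\LUCl^1(G,\Yc)$, then Lemma~\ref{aux8} (resp.\ Corollary~\ref{lin_cont}) combined with \eqref{t5.5_proof_eq2} evaluated at $\1\in G$. Your explicit verification that conditions (1)--(3) match the hypotheses of Corollary~\ref{lin_cont}, including the canonical scalar multiplication $(tX)(s)=X(ts)$, is a reasonable elaboration of a step the paper leaves implicit, but it is not a different argument.
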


\begin{proof}
Let $y\in\Dc_{\de\pi}^1$ be arbitrary and denote $\phi:=\pi(\cdot)y$. 

For the first part of the statement we have to show that $y\in\Dc_{\de\pi}^{1,\lin}$. 
To this end let $X,X_1,X_2\in\Lg(G)$ with the property $X=X_1+X_2$. 
Since $y\in\Dc_{\de\pi}^1$, we get $\phi\in\LUCl^1(G,\Yc)$ by Lemma~\ref{t5.5}, and then  
$D^\lambda_X\phi=D^\lambda_{X_1}\phi+D^\lambda_{X_2}\phi\in\LUCl(G,\Yc)$ 
by Lemma~\ref{aux8}. 
By evaluating both sides of this equality at $\1\in G$ 
and by using~\eqref{t5.5_proof_eq2}, we get $\de\pi(X)y=\de\pi(X_1)y+\de\pi(X_2)y$. 
Therefore $y\in\Dc_{\de\pi}^{1,\lin}$. 

For the second part of the statement we have to show that if the additional 
conditions 1.--3.\ are satisfied, then conditions $y\in\Dc_{\de\pi}^{1,\cont}$. 
To this end we just have to use Corollary~\ref{lin_cont} along with \eqref{t5.5_proof_eq2} again evaluated at $\1\in G$.
\end{proof}

\begin{remark}
\normalfont 
We have seen in Example~\ref{cont_ex}\eqref{lin_ex_item3} that the additional conditions 1.--2.\ 
in Theorem~\ref{final} are satisfied if $G$ is a locally exponential Lie group modeled on a locally convex space which is a Baire space; 
for instance if $G$ is a locally exponential Fr\'echet-Lie group. 
On the other hand, these conditions are also satisfied by groups as in Example~\ref{cont_ex}\eqref{lin_ex_item3.6} whose exponential maps may fail to be locally surjective (see e.g., \cite[Warning 1.6]{Mil84}) 
or as in as in Example~\ref{cont_ex}\eqref{lin_ex_item4} 
where it may fail to be locally injective (e.g., unitary groups of von Neumann factors of type $\text{II}_1$; 
see \cite[Cor. 4.4]{Bel10}). 

We now briefly mention a few other special cases of Theorem~\ref{final} and related results that occurred in the earlier literature. 
\begin{enumerate}
\item It follows by \cite[Satz 2.1]{Bos76} that 
if $G$ is a connected pro-Lie group, then $\Yc_\infty\subseteq\Dc_{\de\pi}^{1,\lin}\cap\Dc_{\de\pi}^{1,\cont}$. 
\item It was proved in \cite[Prop. 2]{Mag81} that if $G$ is a a connected locally compact group, $\Yc$ is a Hilbert space, and $\pi$ is a unitary representation, then $\Ci(\de\pi)\subseteq \Dc_{\de\pi}^{1,\lin}\cap \Dc_{\de\pi}^{1,\cont}$. 
\item It was proved in \cite[Lemma 8.1]{Nee10} that if $G$ is a Fr\'echet-Lie group and $\Yc$ is a Fr\'echet space, then 
$\Dc_{\de\pi}^{1,\lin}\subseteq \Dc_{\de\pi}^{1,\cont}$. 
\item The conclusion of our Theorem~\ref{final} was then obtained in \cite[Th. 8.2]{Nee10}  in the case when $G$ is a Banach-Lie group and $\Yc$ is a Banach space. 
\end{enumerate}
\end{remark}

{\bf Acknowledgment.} 
We wish to thank Karl-Hermann Neeb for several useful suggestions. 
The Referee's remarks also helped us to improve the presentation. 
The second-named author acknowledges partial financial support from
CNCSIS - UEFISCDI, grant PNII - IDEI code 1194/2008.

\end{document}